\theoremstyle{theorem}
\newtheorem{theorem}{Theorem}[section]
\newtheorem{corollary}[theorem]{Corollary}
\newtheorem{lemma}[theorem]{Lemma}
\newtheorem{proposition}[theorem]{Proposition}
\newtheorem{question}{Question}[section]
\numberwithin{equation}{section}
\title{Some existence problems regarding partial Latin squares}
\author{Masood Aryapoor\footnote{E-mail: aryapoor2002@yahoo.com}}
\date{}
\begin{document}

\maketitle

 \begin{abstract}

Latin squares are interesting combinatorial objects with many applications.
When working with Latin squares, one is sometimes led to deal with partial Latin squares, a generalization of Latin squares. 
One of the problems regarding partial Latin square and with applications to Latin squares is whether a partial Latin square with a given set of conditions exists.
The goal of this article is to introduce some problems of this kind and answer some existence questions regarding  partial Latin squares. 

\end{abstract}  

%%%%%%%%%%%%%%%%%%%%%%%%%%%%%%%%%%%%%%%%%%%%%%%%%%%%%%%%%%%%%%%%%%%%%%%%%%%%%%%%%%%%%%%%%%%%%%%%%%%%%%%%%%%
%%%%%%%%%%%%%%%%%%%%%%%%%%%%%%%%%%%%%%%%%%%%%%%%%%%%%%%%%%%%%%%%%%%%%%%%%%%%%%%%%%%%%%%%%%%%%%%%%%%%%%%%%%%
%%%%%%%%%%%%%%%%%%%%%%%%%%%%%%%%%%%%%%%%%%%%%%%%%%%%%%%%%%%%%%%%%%%%%%%%%%%%%%%%%%%%%%%%%%%%%%%%%%%%%%%%%%%
%%%%%%%%%%%%%%%%%%%%%%%%%%%%%%%%%%%%%%%%%%%%%%%%%%%%%%%%%%%%%%%%%%%%%%%%%%%%%%%%%%%%%%%%%%%%%%%%%%%%%%%%%%%

\begin{section}{Introduction}

A \textit{partial Latin square} (or \textbf{PLS} for short) $P$ is a finite nonempty subset of $\mathbb{N}^3=\mathbb{N}\times\mathbb{N}\times \mathbb{N}$ for which the restriction maps  $Pr_{ij}: P\to  \mathbb{N}^2$ are 
injective for $1\leq i<j \leq 3$. Here $Pr_{ij}: \mathbb{N}^3\to \mathbb{N}^2$ is the projection map on the $(i,j)$th factor. A partial Latin square can be represented by using an array  
in the following way. Consider an array whose rows and columns  are indexed by  natural numbers. To the $(i,j)$th cell of the array, assign $k$ if   $(i,j,k)\in P$, and let it remain empty if 
no such $k$ exists. The resulting array, denoted by $A(P)$, has the following properties: it has only a finitely many nonempty cells and every natural number appears at most once in each row and each column 
of $A(P)$. It is easy to see that $P\mapsto A(P)$ gives a 1-1 correspondence between  the set of partial Latin squares and the set of arrays having the mentioned properties. 
Similarly $P$ can also be represented on finite arrays. In this representation, the entries of the cells are usually called the \textit{symbols} of $P$.   
  
Given a partial Lain square $P$, we can associate some parameters to it. The first parameter is the number of elements of $P$ which is called the \textit{volume} of $P$ and denoted by $v(P)$. 
Put $R(P)=Pr_1(P)$, $C(P)=Pr_2(P)$ and $S(P)=Pr_3(P)$ where 
$Pr_i: \mathbb{N}^3\to \mathbb{N}$ is the projection map on the $i$th factor. The number $r(P)=|R(P)|$ is called the \textit{number of rows} of $P$ where $|X|$ stands for the cardinality of a set  $X$. 
Similarly $c(P) =|C(P)|$ is called the \textit{number of columns} of $P$ and $s(P) =|S(P)|$ is called the \textit{number of symbols} of $P$. To get more parameters for $P$,
let $R(P)$ consist of natural numbers $i_1<i_2<...<i_{r(P)}$. Then we obtain natural numbers $|Pr_1^{-1}(i)\cap P|$ for $i=i_1,...,i_{r(P)}$. These natural numbers are called the \textit{row-parameters}
of $P$. In a similar way, the \textit{column-parameters}  and \textit{symbol-parameters} of $P$ are defined. 

The question handled in  this paper is the following. 

\begin{question}\label{QUESTION}

Suppose that natural numbers $m_1,...,m_r$, $n_1,...,n_c$ and $p_1,...,p_s$ are given. How can one decide if there is a partial Latin square  $P$ having 
row-parameters $m_1,...,m_r$, column-parameters $n_1,...,n_c$ and symbol-parameters $p_1,...,p_s$?

\end{question}  
  
A remark about this question is in order. One can easily derive some necessary conditions on $m_1,...,m_r, n_1,...,n_c, p_1,...,p_s$ for the existence of such a PLS. The author is  unaware
if a ''reasonable" set of necessary and sufficient conditions exists in the literature. In any case,  this question is partly answered in this paper.

%\textbf{Acknowledgments:} I would like to thank professor E. S. Mahmoodian for introducing me to the interesting subject of Latin squares.    

\end{section}
%%%%%%%%%%%%%%%%%%%%%%%%%%%%%%%%%%%%%%%%%%%%%%%%%%%%%%%%%%%%%%%%%%%%%%%%%%%%%%%%%%%%%%%%%%%%%%%%%%%%%%%%%%%
%%%%%%%%%%%%%%%%%%%%%%%%%%%%%%%%%%%%%%%%%%%%%%%%%%%%%%%%%%%%%%%%%%%%%%%%%%%%%%%%%%%%%%%%%%%%%%%%%%%%%%%%%%%
%%%%%%%%%%%%%%%%%%%%%%%%%%%%%%%%%%%%%%%%%%%%%%%%%%%%%%%%%%%%%%%%%%%%%%%%%%%%%%%%%%%%%%%%%%%%%%%%%%%%%%%%%%%
%%%%%%%%%%%%%%%%%%%%%%%%%%%%%%%%%%%%%%%%%%%%%%%%%%%%%%%%%%%%%%%%%%%%%%%%%%%%%%%%%%%%%%%%%%%%%%%%%%%%%%%%%%%

\begin{section}{Existence of partial Latin squares}

Before tackling Question \ref{QUESTION}, we need the following lemma from Graph Theory, see \cite{BM} for the relevant material in Matching Theory.    

\begin{lemma}  \label{MATCHINGLEMMA}

Suppose that $G=(X,Y)$ is a bipartite graph such that the degree of each vertex in $G$ is less than or equal to a given natural number $n$. Suppose that 
$X_1\subset X$ and $Y_1\subset Y$ are two sets of vertices such that $d_G(z)=n$ for  all $z\in X_1\cup Y_1$. Then $G$ has  a matching covering all the vertices in 
$X_1\cup Y_1$. 

\end{lemma}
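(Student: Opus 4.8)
The plan is to reduce the statement to the classical fact that the edge set of a bipartite graph of maximum degree $n$ decomposes into $n$ matchings. Concretely, I would first invoke König's edge-colouring theorem (see \cite{BM}): since $G$ is bipartite with $\Delta(G)\le n$, it admits a proper edge colouring with $n$ colours, which is exactly a partition $E(G)=M_1\cup\cdots\cup M_n$ of the edge set into $n$ (possibly empty) matchings.

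The key observation is then that a vertex $z$ with $d_G(z)=n$ must be met by \emph{every} one of these matchings: its $n$ incident edges receive $n$ distinct colours, so precisely one of them lies in each $M_i$, and hence $M_i$ covers $z$. Applying this to every $z\in X_1\cup Y_1$, we conclude that each single matching $M_i$ already covers all of $X_1\cup Y_1$; taking, say, $M_1$ finishes the proof.

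If one prefers to avoid quoting the edge-colouring theorem, an equivalent route is to embed $G$ into an $n$-regular bipartite (multi)graph $G'$: first add isolated vertices so that $|X|=|Y|$, then, as long as some vertex on each side still has degree $<n$, add an edge joining two such vertices — this process terminates because the two sides always have equal total deficiency — until $G'$ is $n$-regular. An $n$-regular bipartite multigraph has a perfect matching $M'$ (Hall's theorem, or again König), and for a vertex $z$ with $d_G(z)=n$ all of its $G'$-edges are original edges of $G$, so the edge of $M'$ incident with $z$ lies in $G$. Hence $M'\cap E(G)$ is a matching of $G$ covering every vertex of degree $n$, in particular all of $X_1\cup Y_1$.

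The only genuine subtlety is that a one-sided argument does not suffice: a defect-form Hall argument easily yields a matching saturating $X_1$ and, separately, one saturating $Y_1$ (for $S\subseteq X_1$ one has $n|S|=\sum_{x\in S}d_G(x)\le n|N_G(S)|$, so Hall's condition holds), but it does not obviously produce a single matching covering $X_1\cup Y_1$ simultaneously. Passing through a proper $n$-edge-colouring — equivalently, through an $n$-regular supergraph and its $1$-factorisation — is precisely what allows both sides to be handled at once, and this is the step I would regard as the heart of the argument.
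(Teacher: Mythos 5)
Your proof is correct, but it follows a genuinely different route from the paper. You reduce the lemma to K\H{o}nig's edge-colouring theorem (equivalently, completion to an $n$-regular bipartite multigraph plus a $1$-factorisation): since $\Delta(G)\le n$, the edge set splits into $n$ matchings, and any vertex of degree exactly $n$ meets every colour class, so a single colour class covers all of $X_1\cup Y_1$. This is clean, and in fact yields the slightly stronger conclusion that \emph{some fixed} matching covers \emph{every} vertex of degree $n$. The paper instead takes exactly the ``one-sided'' route you dismiss as insufficient on its own: it uses the defect/Hall counting argument (the same inequality $n|S|\le n|N_G(S)|$ you write down) to get a matching $M$ saturating $X_1$ and a matching $N$ saturating $Y_1$, and then merges them by analysing the symmetric difference $M\Delta N$, which is a disjoint union of paths and cycles; on each component one selects the edges of $M$ or of $N$ according to a small case analysis on the endpoints, producing a single matching inside $M\cup N$ that covers $X_1\cup Y_1$. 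So the step you flag as the genuine subtlety is real, but it can be resolved without edge-colouring via this alternating-component argument. Your approach buys brevity and a stronger statement at the cost of quoting a heavier classical theorem; the paper's approach is self-contained modulo Hall's theorem but requires the more delicate bookkeeping on maximal paths.
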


\begin{proof}

First we show that $G$ has a matching $M$ covering all the vertices in $X_1$. In fact for every subset $Z\subset X_1$, we have 
$n|Z|=\sum_{z\in N_G(Z)} d(z)\leq n|N_G(Z)|$, i.e. $|Z| \leq  |N_G(Z)|$  where $N_G(Z)$ is the set of vertices in $G$ which are adjacent to some vertex in $Z$.   
By Hall's theorem, $G$ has a matching $M$ which covers $X_1$. Similarly $G$ has a matching $N$ which covers $Y_1$. By deleting some edges if necessary,  we can furthermore assume that $M$ has $|X_1|$ edges 
and $N$ has $|Y_1|$ edges. Let $M\Delta N$ be the symmetric difference of $M$ and $N$. It is known (and in fact easy to see) that   $M\Delta N$ is a vertex-disjoint union of cycles and 
paths. We construct a matching $K\subset M\cup N$ covering all the vertices in $X_1\cup Y_1$ in some steps. 

Given a cycle $C$ in $M\Delta N$, we put $K_C$ to be the set of edges of $C$ which belong to $M$. Clearly $K_C$ covers all the vertices of $C$. 

Next suppose that $P=v_1,...,v_m$ is a maximal path in  $M\Delta N$ with edges $v_1v_2\in M,v_2v_3\in N,...$. Since vertex $v_2$ is covered by both $M$ and $N$, we have  $v_2\in X_1\cup Y_1$.
W consider two cases depending on whether $v_2\in X_1$ or $v_2\in Y_1$. First suppose that $v_2\in X_1$. Then $v_1\notin Y_1$, since otherwise there would exist some vertex $x$ such that $xv_1\in N$, 
a contradiction to the fact that $P$ is a maximal path in $M\Delta N$. 
It is now easy to see that we must have $v_3\in Y_1, v_4\in X_1,...$. If $v_m\in X_1$ (i.e. $m$ is even ), then set $K_P$ to be the set of edges of $P$ used in $M$. If $v_m\in Y_1$ (i.e. $m$ is odd), then put $K_P$ to be 
 the set of edges of $P$ used in $N$. Either way, it can be seen that $K_P$ covers all the vertices of $P$ belonging to $X_1\cup Y_1$.  
Now consider the second case, i.e. $v_2\in Y_1$. 
Then we must clearly have $v_1\in X_1$.  In this case put $K_P$ to be the set of edges of $P$ used in $M$. Then $K_P$ covers all the vertices of $P$ belonging to $X_1\cup Y_1$. To see this, note that  
either  $m$ is odd in which case $v_3\in X_1,v_4\in Y_1,...v_{m-1}\in Y_1, v_m\in X\setminus X_1$,
or $m$ is even in which case   $v_3\in X_1,v_4\in Y_1,...v_{m-1}\in X_1, v_m\in Y\setminus Y_1$.

Similarly we define $K_P$ where   $P=v_1,...,v_m$ is a maximal path in  $M\Delta N$ with edges $v_1v_2\in N,v_2v_3\in M,...$.

Now define $K$ to be the following set of edges of $G$, $K=(M\cap N)\cup (\cup_{Q} K_Q)$ where $Q$ ranges over the set of cycles and maximal paths in $M\Delta N$. I claim that $K$ 
is a matching covering  all the vertices in $X_1\cup Y_1$. First we prove that $K$ is a matching. In the way we have defined $K_Q$'s, it is clear that no vertex is covered by more than one edge in  
$\cup_{Q} K_Q$. It is also clear that $M\Delta N$ is a matching. Finally, since $M\cap N$ and  $\cup_{Q} K_Q$ have no vertex in common, we see that $K$ is in fact a matching. 
Since every vertex of $X_1\cup Y_1$ belongs to $M\cap N$ or one of the cycles or paths of $M\Delta N$, we see that every vertex of 
$X_1\cup Y_1$ is covered by some edge of $K$, as demonstrated above when defining $K_Q$'s.

\end{proof}

\begin{subsection}{Special cases of Question \ref{QUESTION}}

We start with a useful lemma. 

\begin{lemma}  \label{MAINLEMAA}

Let $B$ be a nonempty set of $v$ cells of an $r\times c$ array. Suppose that $B$ has $n_i>0$ cells in the $i$th row and $m_j>0$ cells in the $j$th row for each $i$ and $j$. 
Then the cells in $B$ can be filled out with natural numbers in such a way that we obtain   a PLS, $P$  with $s(P)=\max(n_1,...,n_r,m_1,...,m_c)$.

\end{lemma}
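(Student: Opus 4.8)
The plan is to translate the problem into an edge-colouring statement about bipartite graphs and then establish that statement by repeatedly extracting matchings via Lemma \ref{MATCHINGLEMMA}. To the set $B$ I would associate the bipartite graph $H$ whose two colour classes are the set of nonempty rows and the set of nonempty columns of the array, with one edge joining ``row $i$'' to ``column $j$'' for each cell $(i,j)\in B$; since $B$ is a set of cells, $H$ has no repeated edges. A filling of the cells of $B$ by natural numbers that yields a PLS is then literally the same thing as a proper edge-colouring of $H$: two cells that share a row or a column correspond to edges sharing a vertex, and the PLS conditions (injectivity of $Pr_{13}$ and $Pr_{23}$; injectivity of $Pr_{12}$ being automatic) say exactly that such adjacent edges receive distinct symbols. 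Moreover the number of symbols used equals the number of colours used. The vertex ``row $i$'' has degree $n_i$ and ``column $j$'' has degree $m_j$, so $\Delta(H)=N:=\max(n_1,\dots,n_r,m_1,\dots,m_c)$, and it suffices to produce a proper edge-colouring of $H$ using exactly $N$ colours.

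I would prove this by induction on $N$. If $N=1$, then no two cells of $B$ lie in a common row or column, so assigning the symbol $1$ to every cell works and gives $s(P)=1$. For the inductive step, let $X_1$ (resp.\ $Y_1$) be the set of row-vertices (resp.\ column-vertices) of $H$ of degree exactly $N$; this set is nonempty because the maximum degree $N$ is attained. Every vertex of $H$ has degree at most $N$, so Lemma \ref{MATCHINGLEMMA} with $n=N$ yields a matching $M$ of $H$ covering all of $X_1\cup Y_1$. I would colour every edge of $M$ with the symbol $N$, delete these edges, and let $B'$ be the set of remaining cells, viewed inside the subarray consisting of those rows and columns that still contain a cell of $B'$.

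The key point is the effect on the maximum degree: every vertex of degree $N$ in $H$ lies in $X_1\cup Y_1$ and is therefore covered by $M$, so it has degree $N-1$ in $H-M$; since no vertex of $H$ had degree exceeding $N$, we get $\Delta(H-M)=N-1$ exactly. In particular, when $N\ge 2$ this shows $B'\neq\varnothing$, and the hypotheses of the lemma hold for $B'$, so by the induction hypothesis there is a filling of $B'$ yielding a PLS $P'$ with $s(P')=N-1$; relabelling symbols (a bijection on the third coordinate preserves being a PLS and preserves $s$), we may take the symbols of $P'$ to be $1,\dots,N-1$. Then set $P=P'\cup\{(i,j,N):(i,j)\in M\}$: the restriction maps of $P$ are injective, since on the cells of $P'$ this is the induction hypothesis, the new triples all have third coordinate $N\notin S(P')$, and $M$ being a matching means no two new cells share a row or a column. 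Hence $P$ is a PLS with $s(P)=(N-1)+1=N$.

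The main thing to watch — and the reason Lemma \ref{MATCHINGLEMMA} is stated the way it is — is precisely this last degree computation. A naive induction that removed merely a maximum matching, or a matching saturating just one maximum-degree vertex, would only guarantee $s(P)\le N$; it is the full strength of the lemma, namely a single matching covering \emph{all} maximum-degree vertices on \emph{both} sides at once, that forces $\Delta(H-M)=N-1$ and hence the exact equality $s(P)=N$. Equivalently, the argument is just the classical fact that a bipartite graph of maximum degree $\Delta$ has chromatic index $\Delta$, together with the remark that any vertex of degree $\Delta$ forces all $\Delta$ colours to appear; I would present it in the iterative-matching form above so as to use only the machinery already developed in the paper.
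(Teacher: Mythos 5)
Your proposal is correct and follows essentially the same route as the paper: both build the bipartite row--column graph of $B$, invoke Lemma \ref{MATCHINGLEMMA} to extract one matching covering every maximum-degree vertex on both sides, assign the top symbol to those cells, and induct on the maximum degree, with the exactness $s(P)=N$ secured by the observation that the removed matching drops the maximum degree by precisely one. The edge-colouring language is only a reformulation of the paper's argument, not a different proof.
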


\begin{proof} 

Proof by induction on $t=\max(n_1,...,n_r,m_1,...,m_c)$. If $t=1$, then it implies that $n_i=1$ and $m_j=1$ for all $i,j$ which means $B$ has exactly one cell in each row and one cell in  
each column and consequently, we can easily construct the desired PLS, $P$ with just one symbol.  

Now suppose that a natural number $p$ is given and the lemma holds for all natural numbers $t<p$. We need to prove the lemma for $t=p$. Without loss of generality, we can assume that 
$n_r\leq n_{r-1}\leq ...\leq n_1$ with $n_1=\cdots=n_{r_1}=p$ but $n_{r_1+1}< p$. Similarly we can assume that    
$m_c\leq m_{c-1}\leq ...\leq m_1$ with $m_1=\cdots=m_{c_1}=p$ but $m_{c_1+1}< p$.

Now consider the following bipartite graph $G$. The set of vertices  of $G$ is the union of $X=\{1,...,r\}$ and $Y=\{1,...,c\}$. The vertex $x\in X$ is adjacent to $y\in Y$
if cell $(x,y)$ of the array belongs to $B$. Setting $X_1=\{1,...,r_1\}$ and $Y_1=\{1,...,c_1\}$, we can apply Lemma \ref{MATCHINGLEMMA} to obtain a matching 
$K$ of $G$ covering all the vertices in  $X_1\cup Y_1$. Let the edges of the matching correspond to cells $(i_1,j_1),...,(i_k,j_k)$. Set $B'=B\setminus \{(i_1,j_1), ..., (i_k,j_k)\} .$
 
It is now easy to see that no row or column of the array can have more than $p-1$ cells belonging to $B'$. 
However note that the first row or the first colum has $p-1$ cells belonging to $B'$. So, by induction, 
we can construct a PLS on $B'$ with symbols  $1,...,p-1$. Now if we fill out the remaining cells of $B$ with $p$, then it can easily be seen that 
we have a PLS on $B$ with exactly $p$ symbols.

\end{proof}

The most general form  of Question \ref{QUESTION}, answered in this paper, is the following.

\begin{theorem}\label{MAINTHEOREM}

Suppose that natural numbers $n_1,...,n_r$, $m_1,...,m_c$ and $s$ are given. Then there is a PLS,  $P$  having  row-parameters $n_1,...,n_r$ and column-parameters
$m_1,...,m_c$ such that $s(P)=s$ if and only if  the following hold: (1) $n_1+\cdots+n_r=m_1+\cdots+m_c=v$.
(2) For subsets $I\subset \{1,...,r\}$ and $J\subset  \{1,...,c\}$ we have $\sum_{i\in I} n_i+\sum_{j\in J}m_j\leq v+|I| |J|$.  
(3) $\max(n_1,...,n_r,m_1,...,m_c)\leq s\leq v$.    
  
\end{theorem}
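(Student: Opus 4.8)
The plan is to prove necessity first, which should be routine, and then build the PLS for the "if" direction by a two-stage argument: first realize the prescribed row- and column-parameters by a set $B$ of cells in an $r\times c$ array (ignoring symbols), and then apply Lemma \ref{MAINLEMAA} together with a separate argument to adjust the number of symbols up to the desired value $s$.

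For necessity, condition (1) is immediate since $v(P)$ counts the volume two ways. Condition (3) follows because every symbol must occupy at least one cell (so $s\le v$), while the symbol appearing in the row with $n_i=\max$ cells already forces... more carefully: a single symbol occurs at most once per row and once per column, so a row with $n_i$ filled cells meets $n_i$ distinct symbols, giving $s\ge\max_i n_i$, and similarly $s\ge\max_j m_j$. Condition (2) is the "bipartite degree" constraint: the cells of $P$ lying in rows $I$ or columns $J$ number at least $\sum_{i\in I}n_i+\sum_{j\in J}m_j-|I||J|$ after subtracting the at most $|I||J|$ cells counted twice (those in $I\times J$), and this cannot exceed $v$. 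So necessity is a short paragraph.

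For sufficiency, the key reduction is: \emph{if conditions (1) and (2) hold, there is a set $B$ of $v$ cells in the $r\times c$ array with exactly $n_i$ cells in row $i$ and $m_j$ cells in column $j$.} I would prove this via the Gale–Ryser / max-flow–min-cut theorem for bipartite degree sequences, or directly by induction; condition (2) is exactly the rearranged form of the Gale–Ryser inequalities (equivalently, the feasibility condition for a $0$–$1$ matrix with given row and column sums, after checking it for the sorted sequences suffices — or one simply invokes the Max-Flow-Min-Cut theorem on the obvious transportation network, where every cut corresponds to a choice of $I$ and $J$). This is the step I expect to be the main obstacle, since one must verify that the family of inequalities in (2), quantified over \emph{all} subsets, matches the classical feasibility criterion; I would either cite a standard reference or give the short min-cut argument. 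Once $B$ is in hand, Lemma \ref{MAINLEMAA} fills $B$ to obtain a PLS $P_0$ with $s(P_0)=\max(n_1,\dots,n_r,m_1,\dots,m_c)=:t$.

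It remains to boost the number of symbols from $t$ up to any prescribed $s$ with $t\le s\le v$. Since $v$ is the number of cells and $s(P_0)=t$, at least one symbol occupies a set of at least two cells whenever $t<v$; more usefully, the multiset of symbol-parameters of $P_0$ sums to $v$ with $t$ parts, and I can increase the number of parts by one — keeping the same cell set $B$ and keeping it a PLS — by "recoloring" a single cell $(i,j)$ currently holding a repeated symbol with a brand-new symbol, which is always legal since a fresh symbol violates no row or column constraint. Repeating this $s-t$ times yields a PLS on the same cells with exactly $s$ symbols and unchanged row- and column-parameters, completing the proof. The only point to check here is that as long as the current symbol count is less than $v$ some symbol is used at least twice (pigeonhole), so a recolorable cell always exists.
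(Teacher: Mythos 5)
Your proposal is correct and follows essentially the same route as the paper: invoke Gale--Ryser (or the equivalent max-flow formulation) to realize the cell set $B$ from conditions (1) and (2), apply Lemma \ref{MAINLEMAA} to get a PLS with $\max(n_1,\dots,n_r,m_1,\dots,m_c)$ symbols, and then introduce $s-s_0$ fresh symbols one cell at a time. Your two minor variations --- proving necessity of (2) by direct inclusion--exclusion rather than citing Gale--Ryser, and recoloring only cells whose symbol is repeated (the paper says ``arbitrary cells,'' which taken literally could extinguish an old symbol) --- are both sound and, if anything, slightly tighter than the paper's wording.
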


\begin{proof}

First suppose that such  a PLS, $P$ exists. Then it is clear that the first condition holds where $v$ is just the volume of $P$. To see the second condition, consider an $r\times s$ matrix $E$ where 
$E_{ij}=1$ if cell $(i,j)$ belongs to $P$ and $E_{ij}=0$ otherwise. The well-known criteria of the Gale-Ryser theorem, see  \cite{BR} for example,  gives the condition (2). Finally, we see that $v$, the volume of $P$, 
is at least the number $s$ of the symbols of $P$ and the number of symbols $s$ cannot be less than the number of cells of $P$ in some row or column. Therefore   (3) must hold.      

Conversely, suppose that conditions (1), (2) and (3)  hold. According to the Gale-Ryser theorem, the first two conditions imply that there is a $(0,1)$-matrix $E$ whose row-sum vector is 
$(n_1,...,n_r)$ and whose column-sum vector is $(m_1,...,m_c)$.  Consider the following set $B$ of cells of an $r\times c$ array. Cell $(i,j)$ belongs to $B$ if and only if $E_{ij}=1$. 
It is immediate that $B$ has $n_i$ cells in row $i$ and $m_j$ cells in column $j$ for every $i,j$. By Lemma  \ref{MAINLEMAA}, there is a PLS, $Q$ on $B$ with exactly 
$s_0= \max(n_1,...,n_r,m_1,...,m_c)$ symbols. Let the symbols be $1,...,s_0$. Choose $s-s_0$ arbitrary cells of $Q$ and change their symbols to $s_0+1,...,s$ in an arbitrary order such that each symbol 
$s_0+1,...,s$ is used exactly once.  This is possible since $s_0\leq s\leq v$. The result is now a PLS having the desired conditions.

\end{proof}

Another special case of Question \ref{QUESTION} is given below.

\begin{proposition} \label{PROPOSITION}

Suppose that natural numbers $n_1,...,n_r$, $c$ and $s$ are given. Then there is a PLS,  $P$  having row-parameters $n_1,...,n_r$ such that $c(P)=c$ and $s(P)=s$,  
if and only if  $\max(c,s)\leq n_1+\cdots +n_r\leq c s  $ and $n_i\leq \min (c,s)$ for every $i=1,...,r$. 
 
\end{proposition}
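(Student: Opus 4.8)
The plan is to reduce both implications to Theorem \ref{MAINTHEOREM}, the only new ingredient being an explicit ``round-robin'' choice of the column-parameters in the sufficiency direction.

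For necessity, suppose such a PLS $P$ exists and set $v = n_1 + \cdots + n_r = v(P)$. Let $m_1,\dots,m_c$ and $p_1,\dots,p_s$ be its column- and symbol-parameters. Since each row of the array $A(P)$ has its $n_i$ entries in distinct columns and with distinct symbols, $n_i \le c$ and $n_i \le s$, i.e. $n_i \le \min(c,s)$. As each $m_j \ge 1$ we get $v = \sum_j m_j \ge c$, and as each $p_k \ge 1$ we get $v = \sum_k p_k \ge s$, so $v \ge \max(c,s)$. Finally each column of $A(P)$ has distinct symbols, so $m_j \le s$ and hence $v = \sum_j m_j \le cs$. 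This gives all of the asserted inequalities.

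For sufficiency, assume $\max(c,s) \le v := n_1 + \cdots + n_r \le cs$ and $n_i \le \min(c,s)$ for all $i$; the task is to produce column-parameters $m_1,\dots,m_c$ for which Theorem \ref{MAINTHEOREM}, applied with symbol count $s$, yields the required PLS. I will build a $(0,1)$-matrix $E$ with row-sum vector $(n_1,\dots,n_r)$ and nearly equal column sums. Put $S_i = n_1 + \cdots + n_i$, regard $\{0,1,\dots,v-1\}$ as ``slots'', give row $i$ the block of slots $S_{i-1},\dots,S_i-1$, and place a $1$ in column $t \bmod c$ for each slot $t$ assigned to that row. Because $n_i \le c$, the $n_i$ slots of row $i$ land in $n_i$ distinct columns, so $E$ has row sums $n_i$; and writing $v = qc + \rho$ with $0 \le \rho < c$, column $j$ receives $\lceil (v-j)/c \rceil$ slots, so every column sum $m_j$ lies in $\{\lfloor v/c\rfloor,\ \lceil v/c\rceil\}$. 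From $v \ge c$ we get $m_j \ge \lfloor v/c\rfloor \ge 1$, and from $v \le cs$ we get $m_j \le \lceil v/c\rceil \le s$.

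It then remains to check the three hypotheses of Theorem \ref{MAINTHEOREM} for the data $n_1,\dots,n_r$, $m_1,\dots,m_c$, $s$. Condition (1) holds since $\sum_i n_i = v = \sum_j m_j$; condition (2), being by the Gale--Ryser theorem exactly the solvability of the $(0,1)$-matrix problem with these row and column sums, is witnessed by $E$; and condition (3) holds because $n_i \le s$, $m_j \le s$, and $s \le v$ by hypothesis. Theorem \ref{MAINTHEOREM} then produces a PLS $P$ with row-parameters $n_1,\dots,n_r$, column-parameters $m_1,\dots,m_c$ and $s(P)=s$; since every $m_j \ge 1$, there are exactly $c$ columns, so $c(P)=c$, completing the argument. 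The one genuinely delicate point is the choice of the $m_j$: they must sum to $v$, stay within $[1,s]$, and be balanced enough relative to $(n_1,\dots,n_r)$ for Gale--Ryser to hold; making them as equal as possible handles the range $[1,s]$ via $c \le v \le cs$, while the round-robin layout simultaneously certifies Gale--Ryser by exhibiting $E$, so no separate inequality verification is needed.
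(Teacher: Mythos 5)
Your proof is correct, and its overall strategy coincides with the paper's: both directions reduce to Theorem \ref{MAINTHEOREM}, with the necessity part being essentially identical. The difference lies in how the column-parameters $m_1,\dots,m_c$ are produced in the sufficiency direction. The paper starts from an arbitrary placement of $n_i$ cells in each row (after assuming WLOG $c\le s$) and then runs an iterative exchange argument, repeatedly moving a cell from an overfull column (more than $s$ cells) to an underfull one, until all column counts are at most $s$. You instead give a one-shot explicit construction: the round-robin layout forces every column sum into $\{\lfloor v/c\rfloor,\lceil v/c\rceil\}$, whence $1\le m_j\le s$ follows directly from $c\le v\le cs$. Your version buys three small things: it avoids the WLOG reduction to $c\le s$; it dispenses with the termination/progress argument implicit in the paper's exchange process; and, most substantively, it automatically guarantees $m_j\ge 1$ for every $j$ (needed to conclude $c(P)=c$), a point the paper's rebalancing step does not explicitly address, since that step only pushes column counts down below $s$ and never argues that no column is left empty. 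Both arguments share the observation that the constructed cell set itself witnesses condition (2) of Theorem \ref{MAINTHEOREM} via Gale--Ryser, so no separate verification of the inequalities is required.
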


\begin{proof}

First suppose that such  a PLS, $P$ exists. Since $n_1+\cdots +n_r$ is the volume of $P$ and each column has at least one cell in $P$, we see that
 $c\leq n_1+\cdots +n_r$. Similarly, we have  $s\leq n_1+\cdots +n_r$. Since $P$ is a PLS with $c(P)=c$ and $s(P)=s$, its volume   $n_1+\cdots +n_r$ is at most $s t$. It is clear that a row of the array 
cannot have more than $c$ cells in $P$ and it cannot have more than $s$ cells of the array. In other words, we have $n_i\leq \min (c,s)$ for all $i$.

Conversely, suppose that the conditions hold. Without loss of generality we assume that $c\leq s$. Choose a set $B$ of cells in an $r\times c$ array where $B$ has exactly $n_i$ cells of the array in row $i$ for every $i$. 
This is possible since $n_i\leq c$ for every $i=1,...,r$. For every $j=1,...,c$, let $p_j$ be the number of cells of $B$ in the
$j$th column. Suppose that one of numbers $p_1,...,p_c$, say $p_1$, is greater than $s$. Since $p_1+\cdots+p_c=n_1+\cdots+n_r\leq  c s $, we see that there is some $p_j$ with $p_j< s$. Now, since $p_j<p_1$, there must 
exist  $1\leq i\leq r$ such that $(i,1)\in B$ but $(i,j)\notin B$. Set $B_1=(B\setminus\{(i,1)\})\cup \{(i,j)\}$. It is easy to see that $B_1$ has exactly $n_k$ cells in each row $k$ for every $k=1,...,r$ and 
has exactly $p_1-1,p_2,...,p_{j-1},p_j+1,p_{j+1},...,p_c$ cells in columns $1,...,c$ respectively. Continuing this process, we obtain a subset $B'$ of cells of the array with $n_i$ cells in row $i$    
and $m_j\leq s$ cells in column $j$ for each $i$ and $j$. Now it is clear that $n_1,...,n_r$ and $m_1,...,m_c$ and $s$ satisfy the conditions in 
Theorem \ref{MAINTHEOREM}, and therefore there is a PLS, $P$   having row-parameters $n_1,...,n_r$, column parameters $m_1,...,m_c$ such that $s(P)=s$.
It implies that $P$ has row-parameters $n_1,...,n_r$ and we have $c(P)=c$, $s(P)=s$.

\end{proof}

The following case of Question \ref{QUESTION}, is the last case treated in this paper.  

\begin{corollary} 

Suppose that natural numbers $r$, $c$, $s$ and $v$ are given. Then there is a PLS,  $P$  with $r(P)=r$, $c(P)=c$, $s(P)=s$ and $v(P)=v$ 
if and only if  $\max(r,c,s)\leq v\leq \min(r c, c s, r s) $. 
 
\end{corollary}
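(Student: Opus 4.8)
The plan is to get necessity straight from the definition of a PLS and to get sufficiency by feeding a suitably chosen sequence of row-parameters into Proposition~\ref{PROPOSITION}.

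For necessity, suppose such a PLS $P$ exists. Each of the $r$ rows, $c$ columns and $s$ symbols of $P$ actually occurs in $P$, hence contributes at least one triple, so $v\geq\max(r,c,s)$. On the other hand, since $Pr_{12}$, $Pr_{13}$ and $Pr_{23}$ are injective, $P$ injects into $R(P)\times C(P)$, $R(P)\times S(P)$ and $C(P)\times S(P)$ respectively, giving $v\leq rc$, $v\leq rs$ and $v\leq cs$, i.e. $v\leq\min(rc,cs,rs)$.

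For sufficiency, assume $\max(r,c,s)\leq v\leq\min(rc,cs,rs)$ and set $M=\min(c,s)$. The one real step is to exhibit natural numbers $n_1,\dots,n_r$ with $1\leq n_i\leq M$ for all $i$ and $n_1+\cdots+n_r=v$; this is possible exactly when $r\leq v\leq rM$, which holds because $r\leq v$ from $\max(r,c,s)\leq v$ and $v\leq rM=\min(rc,rs)$ from $v\leq\min(rc,cs,rs)$ (one simply distributes $v$ as evenly as possible among the $r$ entries). With this choice, $\max(c,s)\leq v=n_1+\cdots+n_r$ and $n_1+\cdots+n_r=v\leq cs$ by hypothesis, and $n_i\leq\min(c,s)$ by construction, so all the hypotheses of Proposition~\ref{PROPOSITION} are satisfied. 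It therefore yields a PLS $P$ with row-parameters $n_1,\dots,n_r$, $c(P)=c$ and $s(P)=s$. Since every $n_i$ is positive, $P$ occupies exactly $r$ rows, so $r(P)=r$, and $v(P)=n_1+\cdots+n_r=v$, as required.

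I do not expect a genuine obstacle; the only care needed is the bookkeeping that translates the single chain $\max(r,c,s)\leq v\leq\min(rc,cs,rs)$ into (a) the solvability of $n_1+\cdots+n_r=v$ under the per-entry bound $n_i\leq\min(c,s)$ and (b) the two scalar inequalities $\max(c,s)\leq v\leq cs$ demanded by Proposition~\ref{PROPOSITION}. Both reductions are immediate once $M=\min(c,s)$ is introduced, after which the corollary follows at once.
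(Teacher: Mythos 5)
Your proof is correct and follows the same overall route as the paper: necessity by counting rows, columns, symbols and cells, and sufficiency by reduction to Proposition~\ref{PROPOSITION}. The one place you diverge is in how the row-parameters $n_1,\dots,n_r$ are produced: the paper starts from an arbitrary set of $v$ cells in an $r\times c$ array and reruns the cell-swapping redistribution from the proof of Proposition~\ref{PROPOSITION} to force each row count down to at most $s$, whereas you construct the $n_i$ purely arithmetically, distributing $v$ as evenly as possible subject to $1\leq n_i\leq\min(c,s)$, which is feasible precisely because $r\leq v\leq r\min(c,s)=\min(rc,rs)$. Your version is slightly cleaner: it makes explicit that each $n_i\geq 1$, so that $(n_1,\dots,n_r)$ really is a length-$r$ row-parameter vector and hence $r(P)=r$ and $v(P)=v$ come for free, a point the paper's choice of the initial cell set $B$ leaves implicit.
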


\begin{proof}

First suppose that such  a PLS, $P$ exists. Then $P$ has one cell in each row which means $r\leq v$. Similarly one can show that $c\leq v$ and  $s\leq v$. Since $P$
can be represented on an $r\times c$ array and $v$ is the number of cells of the array occupied by $P$, it is immediate that $v\leq r c $. Similarly we have $v\leq c s$ and $v\leq r s$.

Conversely, suppose that the inequalities hold. Choose a set $B$ of cells of an $r\times c$ array such that $|B|=v$. This is possible since $v\leq r c $.  
Following the same argument as in the proof of Proposition \ref{PROPOSITION}, by starting from $B$ and using the condition $v\leq r s$, we can construct a set $B'$ of cells in the array 
such that  $B'$ has $n_i\leq s$ cells in the $i$th row for every $i=1,...,r$. It is obvious that  $n_i\leq c$ cells in the $i$th row for every $i=1,...,r$. Now natural numbers 
$n_1,...,n_r$, $c$ and $s$ satisfy the conditions in Proposition \ref{PROPOSITION}. Therefore there is a PLS, $P$ having row-parameters $n_1,...,n_r$ such that $c(P)=c$ and $s(P)=s$.
It is clear that $P$ is the desired PLS and therefore the proof is complete.

\end{proof}

\end{subsection}
%%%%%%%%%%%%%%%%%%%%%%%%%%%%%%%%%%%%%%%%%%%%%%%%%%%%%%%%%%%%%%%%%%%%%%%%%%%%%%%%%%%%%%%%%%%%%%%%%%%%%%%%%%%
%%%%%%%%%%%%%%%%%%%%%%%%%%%%%%%%%%%%%%%%%%%%%%%%%%%%%%%%%%%%%%%%%%%%%%%%%%%%%%%%%%%%%%%%%%%%%%%%%%%%%%%%%%%
%%%%%%%%%%%%%%%%%%%%%%%%%%%%%%%%%%%%%%%%%%%%%%%%%%%%%%%%%%%%%%%%%%%%%%%%%%%%%%%%%%%%%%%%%%%%%%%%%%%%%%%%%%%
%%%%%%%%%%%%%%%%%%%%%%%%%%%%%%%%%%%%%%%%%%%%%%%%%%%%%%%%%%%%%%%%%%%%%%%%%%%%%%%%%%%%%%%%%%%%%%%%%%%%%%%%%%%

\end{section}

\end{document}